\newcommand{\eps}{{\varepsilon}}
\newcommand{\Z}{{\mathbb Z}}
\newcommand{\Q}{{\mathbb Q}}
\newtheorem{corollary}{Corollary}
\title{Rational Quartic Reciprocity II}
\author{Franz Lemmermeyer}
\address{M\"orikeweg 1, 73489 Jagstzell, Germany}
\email{hb3@ix.urz.uni-heidelberg.de}
\subjclass{11 R 16, 11 A 15}
\begin{document}
\maketitle

\section{Introduction}

Let $m = p_1 \cdots p_r$ be a product of primes $p_i \equiv 1 \bmod 4$
and assume that there are integers $A, B, C \in \Z$ such that 
$A^2 = m(B^2+C^2)$ and $A-1 \equiv B \equiv 0 \bmod 2$, 
$A+B\equiv 1 \bmod 4$. Then 
\begin{equation}\label{EC}
	\Big(\frac{A+B\sqrt{m}}{p}\Big) = \Big(\frac{p}{m}\Big)_4
\end{equation}
for every prime $p \equiv 1 \bmod 4$ such that
$(p/p_j) = +1$ for all $1 \le j \le r$. 
This is 'the extension to composite values of $m$' that 
was referred to in \cite{LAA}, to which this paper is an
addition. Here I will fill in the
details of a proof, on the one hand because I was
requested to do so, and on the other hand because this
general law can be used to derive general versions of 
Burde's and Scholz's reciprocity laws.

Below I will sketch an elementary proof of \eqref{EC} 
using induction built on the results of \cite{LAA}, and 
then use the description of abelian fields by characters
to give a direct proof.

\section{Proof by Induction}

Using induction over the number of prime factors of $m$
we may assume that (\ref{EC}) is true if $m$ has
$r$ different prime factors. 

Now assume that $m = p_1 \cdot m'$; we choose integers
$A, B, A_1, B_1$ such that $B$ and $B_1$ are even and
$A+B \equiv A_1+B_1 \equiv 1 \bmod 4$, satisfying
$$\begin{array}{rclrcl}
 A^2  &=& m(B^2+C^2),        & A_1^2 &=& p_1(B_1^2+C_1^2),
	\qquad \text{ and put} \\
\alpha   &=& A+B\sqrt{m},  & \alpha_1 &=& A_1 + B_1 \sqrt{p_1}.
\end{array} $$
Then $K = \Q(\sqrt{\alpha}\,)$ and $K_1 = \Q(\sqrt{\alpha_1}\,)$
are cyclic quartic extensions of conductors $m$ and $p_1$,
respectively. 

Consider the compositum $K_1K$; it is an abelian extension of type
$(4,4)$ over $\Q$, and it clearly contains
$F =  \Q(\sqrt{m'}, \sqrt{p_1}\,)$. Moreover, $F$
has three quadratic extensions in $K_1K$, namely
$F(\sqrt{\alpha}\,)$, $F(\sqrt{\alpha_1}\,)$,  and
$L = F(\sqrt{\alpha\alpha_1}\,)$. It is not hard to see
that $L$ is the compositum of a cyclic quartic extension
$\Q(\sqrt{\alpha'}\,)$ of conductor $m'$ and 
$\Q(\sqrt{p_1}\,)$. Since $\alpha\alpha_1$
and $\alpha'$ differ at most by a square in $F$, we find
$$\Big(\frac{\alpha'}{p}\Big) \ = \
	\Big(\frac{\alpha}{p}\Big)\Big(\frac{\alpha_1}{p}\Big).$$
On the other hand, by the induction hypothesis we have
$(\alpha'/p) = (p/m')_4$, hence we find
$$\Big(\frac{\alpha}{p}\Big) \ = \
	\Big(\frac{\alpha'}{p}\Big) \Big(\frac{\alpha_1}{p}\Big) \ = \
	\Big(\frac{p}{m'}\Big)_4 \Big(\frac{p}{p_1}\Big)_4  \ = \
	\Big(\frac{p}{m}\Big)_4.$$
This is what we wanted to prove.

\section{Proof via Characters}

Let $K$ be a cyclotomic field with conductor $f$. Then
it is well known (see \cite{Was} for the necessary background)
that the subfields of $\Q(\zeta_f)$ correspond biuniquely to a 
subgroup of the character group of $(\Z/f\Z)^\times$. 

Let $m = p_1 \ldots p_r$ be a product of primes $p \equiv 1 \bmod 4$,
and let $\phi_j$ denote the quadratic character modulo $p_j$.
There exist two quartic characters modulo $p_j$, namely
$\omega_j$ (say) and $\omega_j^{-1} = \phi_j\omega_j$;
for primes $p$ such that $\chi_j(p) = (p/p_j) = +1$ we have
$\omega_j(p) = (p/p_j)_4$.

The quadratic subfield $\Q(\sqrt{m}\,)$ of $L = \Q(\zeta_m)$
corresponds to the subgroup $\langle \phi \rangle$, where
$\phi = \phi_1 \cdots \phi_r$; similarly, there is
a cyclic quartic extension $K$ contained in $L$ which
corresponds to $\langle \omega \rangle$, where $\omega$
is a character of order $4$ and conductor $m$. Moreover,
$K$ contains $\Q(\sqrt{m}\,)$, hence we must have 
$\omega^2 = \phi$. This implies at once that
$\omega = \omega_1 \cdots \omega_r \cdot \phi'$, where
$\phi'$ is a suitably chosen quadratic character.
By the decomposition law in abelian extensions a
prime $p$ splitting in $\Q(\sqrt{m}\,)$ will split
completely in $K$ if and only if $\omega(p) = +1$,
i.e. if and only if $(p/m)_4 = + 1$ (the quadratic
character $\phi'$ does not influence the splitting
of $p$ since $\phi'(p) = 1$).

By comparing this with the decomposition law in Kummer
extensions we see immediately that Eq. \eqref{EC} holds.

\noindent {\bf Remark.} If we define $(p/2)_4 = (-1)^{(p-1)/8}$
for all primes $p \equiv 1 \bmod 8$, then the above proofs
show that \eqref{EC} is also valid for even $m$; one
simply has to replace the cyclic quartic extension of conductor
$p$ by the totally real cyclic quartic extension of conductor $8$,
i.e. the real quartic subfield of $\Q(\zeta_{16})$.

\section{Some Rational Quartic Reciprocity Laws}
\subsection*{Burde's Reciprocity Law}
Let $m$ and $n$ be coprime integers, and assume that
$m = \prod p_i$ and $n = \prod q_j$ are products of
primes $\equiv 1 \bmod 4$. Assume moreover that $(m/q_j)
= (n/p_i) = +1$ for all $p_i$ and $q_j$. Write
$m = a^2 + b^2$, $n = c^2+d^2$ with $ac$ odd; then
we can prove as in \cite{LAA} that
$$\Big(\frac{m}{n}\Big)_4 \Big(\frac{n}{m}\Big)_4
	= \Big(\frac{ac-bd}{m}\Big) 
	= \Big(\frac{ac-bd}{n}\Big).$$

\noindent {\bf Remark.} It is easy to deduce Gauss' criterion
for the biquadratic character of $2$ from Burde's law. In fact,
assume that $p = a^2 + 16b^2 \equiv 1 \bmod 8$ is prime, and
choose the sign of $a$ in such a way that $a \equiv 1 \bmod 4$; 
then
$$\Big(\frac{2}{p}\Big)_4 \Big(\frac{p}{2}\Big)_4 =
	\Big(\frac{a-4b}{2}\Big) = \Big(\frac{2}{a-4b}\Big).$$
Since $(p/2) = (-1)^{(p-1)/8}$ and $p-1 = a^2 - 1 + 16b^2
\equiv (a-1)(a+1) \bmod 16$ we find
$\frac{p-1}8 = \frac{a-1}4 \frac{a+1}2 \equiv \frac{a-1}4 \bmod 2$,
and this gives $(-1)^{(p-1)/8} = (2/a)$. Thus
$(2/p)_4 = (2/a)(2/a+4b) = (2/a^2+4b) = (2/1+4b) = (-1)^b.$

\subsection*{Scholz's Reciprocity Law}

Let $\eps_m = t+u \sqrt{m}$ be a unit in $\Q(\sqrt{m}\,)$ 
with norm $-1$. Putting $\eps_m \sqrt{m} = A+B\sqrt{m}$
we find immediately
\begin{equation}\label{SRL}
\Big(\frac{\eps_m}{p}\Big) = 
	\Big(\frac{m}{p}\Big)_4 \Big(\frac{p}{m}\Big)_4 
\end{equation}
for all primes $p \equiv 1 \bmod 4$ such that $(p_j/p) = 1$
for all $p_j \mid m$. If $n$ is a product of such primes $p$,
this implies
$$\Big(\frac{\eps_m}{n}\Big) = 
	\Big(\frac{m}{n}\Big)_4 \Big(\frac{n}{m}\Big)_4. $$
Moreover, if the fundamental unit of $\Q(\sqrt{n}\,)$ has
negative norm, we conclude that
$$\Big(\frac{\eps_m}{n}\Big) = 
	\Big(\frac{m}{n}\Big)_4 \Big(\frac{n}{m}\Big)_4
	= \Big(\frac{\eps_n}{m}\Big).$$

The general version of Scholz's reciprocity law has a few
nice corollaries:

\begin{corollary}
Let $m$ and $n$ satisfy the conditions above, and suppose that $m = rs$;
assume moreover that the fundamental units $\eps_r$ and $\eps_s$
of $\Q(\sqrt{r}\,)$ and $\Q(\sqrt{s}\,)$ have negative norm. Then 
$$\Big(\frac{\eps_m}{n}\Big) = 
	 \Big(\frac{\eps_r}{n}\Big)\Big(\frac{\eps_s}{n}\Big).$$
\end{corollary}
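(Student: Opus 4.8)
The plan is to read off the identity from the general form of Scholz's reciprocity law \eqref{SRL} (in its $n$-version) applied to the three pairs $(m,n)$, $(r,n)$, $(s,n)$, combined with the multiplicativity of the rational quartic residue symbol in each of its arguments. That this $n$-version applies to all three pairs is immediate: it holds for $(m,n)$ by the standing hypotheses on $m$ and $n$, and every prime dividing $r$ or $s$ also divides $m$, so the splitting condition $(p_i/q_j)=1$ for all $p_i\mid m$, $q_j\mid n$ is inherited by $(r,n)$ and $(s,n)$, while $\eps_r$ and $\eps_s$ have negative norm by assumption. Hence
$$\Big(\frac{\eps_m}{n}\Big)=\Big(\frac{m}{n}\Big)_4\Big(\frac{n}{m}\Big)_4,\qquad \Big(\frac{\eps_r}{n}\Big)=\Big(\frac{r}{n}\Big)_4\Big(\frac{n}{r}\Big)_4,\qquad \Big(\frac{\eps_s}{n}\Big)=\Big(\frac{s}{n}\Big)_4\Big(\frac{n}{s}\Big)_4.$$

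It then remains to split the two composite symbols attached to $m$. One direction is pure bookkeeping: since $r$ and $s$ are coprime, the primes dividing $m=rs$ are exactly those dividing $r$ together with those dividing $s$, so from the definition $(n/m)_4=\prod_{p_i\mid m}(n/p_i)_4$ one gets $(n/m)_4=(n/r)_4(n/s)_4$. For the other symbol, $(m/n)_4=\prod_{q_j\mid n}(m/q_j)_4$, I would use that on the quadratic residues modulo $q_j$ the symbol $a\mapsto(a/q_j)_4$ is multiplicative (it agrees there with the quartic character $\omega_j$): since the hypotheses force $r$ and $s$ to be quadratic residues modulo each $q_j$, we may write $(m/q_j)_4=(rs/q_j)_4=(r/q_j)_4(s/q_j)_4$, and therefore $(m/n)_4=(r/n)_4(s/n)_4$. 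Multiplying the Scholz identities for $(r,n)$ and $(s,n)$ and substituting these factorizations gives
$$\Big(\frac{\eps_r}{n}\Big)\Big(\frac{\eps_s}{n}\Big)=\Big(\frac{r}{n}\Big)_4\Big(\frac{s}{n}\Big)_4\Big(\frac{n}{r}\Big)_4\Big(\frac{n}{s}\Big)_4=\Big(\frac{m}{n}\Big)_4\Big(\frac{n}{m}\Big)_4=\Big(\frac{\eps_m}{n}\Big),$$
which is the assertion.

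Thus once the general Scholz law is granted the corollary is essentially formal, and there is no deep obstacle. The one step that is not a formal consequence of $m=rs$, and which I would treat with care, is the splitting $(m/q_j)_4=(r/q_j)_4(s/q_j)_4$: it requires $r$ and $s$ to be quadratic residues modulo $q_j$ \emph{separately}, so that the symbols on the right are even defined. This is where the standing hypothesis $(p_i/q_j)=1$ for all $i,j$ — equivalently, by quadratic reciprocity, $(q_j/p_i)=1$ for all $i,j$ — enters, via multiplicativity of the Legendre symbol; it is also what guarantees that each $q_j$ splits in $\Q(\sqrt r\,)$ and $\Q(\sqrt s\,)$, so that $(\eps_r/n)$ and $(\eps_s/n)$ make sense in the first place.
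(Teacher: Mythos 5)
Your proof is correct and follows the same route as the paper: apply the general Scholz law \eqref{SRL} to $(m,n)$, $(r,n)$ and $(s,n)$ and use multiplicativity of the rational quartic symbols to split $(m/n)_4(n/m)_4$ into the $r$- and $s$-parts. You merely spell out the details (inheritance of the splitting hypotheses and why $(rs/q_j)_4=(r/q_j)_4(s/q_j)_4$ is legitimate) that the paper's one-line computation leaves implicit.
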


\begin{proof}
This is a simple computation:
$$ \Big(\frac{\eps_m}{n}\Big) = 
	\Big(\frac{m}{n}\Big)_4 \Big(\frac{n}{m}\Big)_4 = 
	\Big(\frac{r}{n}\Big)_4 \Big(\frac{n}{r}\Big)_4
	\Big(\frac{s}{n}\Big)_4 \Big(\frac{n}{s}\Big)_4 =
	\Big(\frac{\eps_r}{n}\Big)\Big(\frac{\eps_s}{n}\Big),$$
where we have twice applied \eqref{SRL}.
\end{proof}

\begin{corollary}
Let $m = p_1 \cdots p_t$ and $n$ satisfy the conditions above; then
$$\Big(\frac{\eps_m}{n}\Big) = 
	\Big(\frac{\eps_1}{n}\Big) \cdots
		\Big(\frac{\eps_t}{n}\Big), $$
where $\eps_j$ denotes the fundamental unit in $\Q(\sqrt{p_j}\,)$.
\end{corollary}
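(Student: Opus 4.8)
The plan is to bypass any fresh induction and reduce the statement directly to the product form of Scholz's law \eqref{SRL}, applied to each prime factor of $m$ one at a time. First I would record the single classical fact that is not literally among the standing hypotheses: for a prime $p\equiv 1\bmod 4$ the fundamental unit $\eps_p$ of $\Q(\sqrt{p}\,)$ has norm $-1$. Hence each $\eps_j$ exists with negative norm, the symbol $(\eps_j/n)$ is meaningful, and --- since the standing assumption "$(p_i/p)=1$ for all $p_i\mid m$ and all $p\mid n$" contains in particular "$(p_j/p)=1$ for all $p\mid n$" --- the pair $(p_j,n)$ satisfies exactly the hypotheses under which \eqref{SRL} was proved, with $p_j$ in the role of $m$.

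Then the computation is short. Starting from the product form $(\eps_m/n)=(m/n)_4(n/m)_4$ already recorded in the Scholz subsection, I would expand both quartic symbols prime-by-prime: $(n/m)_4=\prod_j(n/p_j)_4$ is the definition of the rational quartic symbol at the composite denominator $m=p_1\cdots p_t$, while $(m/n)_4=\prod_j(p_j/n)_4$ follows from the complete multiplicativity of $a\mapsto(a/n)_4$ on the group of integers that are quadratic residues modulo every prime dividing $n$. For this expansion to be legitimate one must check that each $p_j$ (and $m$) is such a residue, which is $(p_j/p)=1$ for $p\mid n$, and that $(n/p_j)_4$ is defined, i.e. $(n/p_j)=1$; the latter follows from $(p/p_j)=(p_j/p)=1$ by quadratic reciprocity, both $p$ and $p_j$ being $\equiv 1\bmod 4$. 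Reassembling the factors gives
$$\Big(\frac{\eps_m}{n}\Big) = \Big(\frac{m}{n}\Big)_4 \Big(\frac{n}{m}\Big)_4 = \prod_{j=1}^{t}\Big(\frac{p_j}{n}\Big)_4 \Big(\frac{n}{p_j}\Big)_4 = \prod_{j=1}^{t}\Big(\frac{\eps_j}{n}\Big),$$
the last equality being \eqref{SRL} applied $t$ times, once to each $p_j$.

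I expect the only delicate point to be the bookkeeping of which quadratic residue symbols are known to be $+1$, so that all the quartic symbols written down are defined and the splitting $(m/n)_4=\prod_j(p_j/n)_4$ is justified; once that is in place the corollary is a one-line identity, and it is visibly the common generalization of Corollary~1 (the case $t=2$, where the negative-norm hypothesis imposed there becomes automatic here because the factors are prime). I would deliberately avoid the superficially tempting alternative of inducting on $t$ via Corollary~1 with a split $m=p_1m'$: that route would require the fundamental unit $\eps_{m'}$ of $\Q(\sqrt{m'}\,)$ to have negative norm, which is neither assumed nor automatic for a composite $m'$, whereas the direct factorization through \eqref{SRL} never introduces units attached to proper composite divisors of $m$.
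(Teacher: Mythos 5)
Your argument is exactly the one the paper intends: it dismisses the proof as ``clear,'' meaning the $t$-fold version of the computation given for Corollary~1, namely $(\eps_m/n)=(m/n)_4(n/m)_4=\prod_j(p_j/n)_4(n/p_j)_4=\prod_j(\eps_j/n)$ via repeated use of \eqref{SRL}. Your additional care about the negative norm of each $\eps_{p_j}$ and the residue conditions making all quartic symbols defined is correct and only makes explicit what the paper takes for granted.
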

This is a result due to Furuta \cite{Fur}; its proof is clear.

\section{Some Remarks on the $4$-rank of class groups}
The reciprocity laws given above are connected with the
$4$-rank of class groups: 
let $k$ be a real quadratic number field of discrimniant $d$,
and assume that $d$ can be written as a sum of two squares.
It is well known (\cite{Sch}) that the quadratic unramified 
extensions of $k$ correspond to factorizations $d = d_1d_2$
of $d$ into two relatively prime discriminants $d_1, d_2$
with at least one of the $d_i$ positive,
and that cyclic quartic extensions which are unramified
outside $\infty$ correspond to $C_4$-extensions
$d = d_1 \cdot d_2$, where $(d_1/p_2) = (d_2/p_1) = +1$
for all primes $p_j \mid d_j$. 

Let $K=k(\sqrt{\alpha}\,)$ be such an extension, corresponding 
to $d = d_1 \cdot d_2$. Then any quartic cyclic extension of
$k$ which contains $\Q(\sqrt{d_1},\sqrt{d_2}\,)$ and
which is unramified outside $\infty$ has the form  
$K' = k(\sqrt{d'\alpha})$, where $d'$ is a product of prime
discriminants occuring in the factorization of $d$ as a
product of prime discriminants. Since these prime discriminants
are all positive, either all of these extensions
$K'/k$ are totally real, or all of them are totally complex.
Scholz \cite{Sch} has sketched a proof for the fact that
the $K'$ are totally real if and only if $(d_1/d_2)_4 = (d_2/d_1)_4$;
an elementary proof was given in \cite{Lem}.

In addition to the references given in \cite{LAA} we should
remark that Kaplan \cite{Kap} has also proved the general
version of Burde's reciprocity law and noticed the 
connection with the structure of the $2$-class groups of
real quadratic number fields.

\end{document}